\numberwithin{equation}{section}
\newtheorem{theorem}{Theorem}[section]
\newtheorem*{theorem*}{Theorem}
\newtheorem*{prob*}{Problem}
\theoremstyle{remark}
\newtheorem{df}[theorem]{Definition}
\newcommand*{\ascr}{\mathscr{A}}
\newcommand*{\card}{\mathrm{card}}
\newcommand*{\hh}{\mathcal{H}}
\newcommand*{\kk}{\mathcal{K}}
\newcommand*{\lcal}{\mathcal{L}}
\newcommand*{\natu}{\mathbb{N}}
\newcommand*{\mcal}{\mathcal{M}}
\theoremstyle{definition}
\newcommand*{\Le}{\leqslant}
\newcommand*{\ran}{\mathscr{R}}
\newcommand*{\ogr}{\boldsymbol{B}}
\def\sslim{\qopname\relax m{{\sf{s}\textrm-}lim}}
\def\wwlim{\qopname\relax m{{\sf{w}\textrm-}lim}}
\begin{document}

   \title[Hyperrigidity III] {Hyperrigidity III}

   \author[P. Pietrzycki and J. Stochel]{Pawe{\l} Pietrzycki and  Jan Stochel}

   \subjclass[2020]{Primary 46G10, 47B15;
   Secondary 47A63, 44A60}

   \keywords{hyperrigidity, completely positive maps, representations}

   \address{Wydzia{\l} Matematyki i Informatyki, Uniwersytet
Jagiello\'{n}ski, ul. {\L}ojasiewicza 6,
PL-30348 Krak\'{o}w, Poland.}

   \email{pawel.pietrzycki@im.uj.edu.pl}

   \address{Wydzia{\l} Matematyki i Informatyki, Uniwersytet
Jagiello\'{n}ski, ul. {\L}ojasiewicza 6,
PL-30348 Krak\'{o}w, Poland.}

   \email{jan.stochel@im.uj.edu.pl}

%    \thanks{The research of both
% authors was supported by the National Science Center
% (NCN) Grant OPUS No.\ DEC-2021/43/B/ST1/01651.}

   \begin{abstract}In this paper, we study hyperrigidity for $C^*$-algebras. We will show that hyperrigidity can be expressed solely in terms of representations, without the need to involve general unital completely positive maps. 

   \end{abstract}

   \maketitle

   \section{Introduction}
Denote by $\natu$ the set of all positive integers.
Henceforth, $\ogr(\hh)$ stands for the $C^*$-algebra
of all bounded linear operators on a (complex) Hilbert
space $\hh$ and $I$ denotes the identity operator on
$\hh$. We always assume that completely positive maps
are linear and representations of unital
$C^*$-algebras preserve units and involutions.

The classical approximation theorem due to P. P.
Korovkin \cite{Kor53} states that for any sequence of
positive linear maps $\varPhi_k\colon C[0,1]\to
C[0,1]$ ($k\in \natu$),
   \begin{align*}
\lim_{k\to\infty}\|\varPhi_k(x^j)-x^j\|=0 \;\; \forall
j\in \{0,1,2\} \implies
\lim_{k\to\infty}\|\varPhi_k(f)-f\|=0 \;\; \forall
f\in C[0,1].
   \end{align*}
In other words, the asymptotic behaviour of the
sequence $\{\varPhi_k\}_{k=1}^\infty$ on the
$C^*$-algebra $C[0, 1]$ is uniquely determined by the
set $G=\{\boldsymbol{1}, x, x^2\}$. Sets with this
property are called Korovkin sets. Korovkin's theorem
unified many existing approximation processes such as
the Bernstein polynomial approximation of continuous
functions and the Fej\'{e}r trigonometric polynomial
approximation of continuous functions on the unit
circle. Another major achievement was the discovery of
geometric theory of Korovkin sets by Y. A.
\v{S}a\v{s}kin \cite{Sas67}. Namely, \v{S}a\v{s}kin
observed that the key property of $G$ is that the
Choquet boundary of the vector space spanned by $G$
coincides with $[0, 1]$. Detailed surveys of most of these
developments can be found in \cite{BL75,AC94,Alt10}.
Recently, the \v{S}a\v{s}kin theorem has been extended
by Davidson and Kennedy to the case of (not
necessarily metrizable) compact Hausdorff spaces (see
\cite[Theorem~5.3]{DK21}). Motivated both by the
fundamental role of the Choquet boundary in classical
approximation theory, and by the importance of
approximation in the contemporary theory of operator
algebras, Arveson introduced hyperrigidity as a form
of 'noncommutative' approximation that captures many
important operator-algebraic phenomena.

For the purposes of this paper, we use the concept of
hyperrigidity in a more general context.
   \begin{df}[Hyperrigidity] \label{dyfnh}
A nonempty subset $ G$ of a unital $C^*$-algebra
$\ascr$ is said to be \textit{hyperrigid} ({\em
relative to} $\ascr$\/) if for any faithful
representation $\pi\colon \ascr\to \ogr(\hh)$ on a
Hilbert space $\hh$ and for any sequence
$\varPhi_k\colon \ogr(\hh)\to \ogr(\hh)$ ($k\in
\natu$) of unital completely positive (UCP) maps,
   \begin{align*}
\lim_{k\to\infty}\|\varPhi_k(\pi(g))-\pi(g)\|=0
\; \forall g\in G \implies
\lim_{k\to\infty}\|\varPhi_k(\pi(a))-\pi(a)\|=0
\; \forall a\in \ascr.
   \end{align*}
   \end{df}
Unlike Arveson's original definition of hyperrigidity
(cf.\ {\cite[Definition~1.1]{Arv11}}), we do not
require the set $G$ to be at most countable or
generate $\ascr$. To find out more about this, we
refer the reader to \cite[Appendix~B]{P-S-24}. Let us also
note that even if $\ascr$ is commutative, the concept
of hyperrigidity is {\em a priori} stronger than the
concept of Korovkin's set, because in the former case
each map $\varPhi_k$ ($k\in \natu$) can take
noncommutative values.

Arveson gave a
characterization of hyperrigidity that replaces the
limit process by the so-called {\em unique extension
property} (see Theorem~\ref{arahc}(iii)). On the other hand, Kleski provided a characterization of hyperrigidity in which norm topology is replaced by weak operator topology (equivalently  strong operator topology) (see Theorem~\ref{arahc}(iv)).  
  \begin{theorem}[\mbox{\cite[Theorem~2.1]{Arv11},
\cite[Proposition~2.2]{Kle14} \& \cite[Theorem~B.2]{P-S-24}}] \label{murzeqiv}\label{arahc}
Suppose that $ G$ is a nonempty subset of a unital
$C^*$-algebra $\ascr$. Consider the following
conditions{\em :}
   \begin{enumerate}
   \item[(i)] $G$ is hyperrigid,
   \item[(ii)] for every Hilbert space $\hh$, every
representation $\pi\colon \ascr\to \ogr(\hh)$ and
every sequence $\varPhi_n\colon \ascr\to \ogr(\hh)$
$($$n\in \natu$$)$ of UCP maps,
   \begin{align*}
\lim_{n\to\infty}\|\varPhi_n(g)-\pi(g)\|=0 \;\;
\forall g\in G \implies
\lim_{n\to\infty}\|\varPhi_n(a)-\pi(a)\|=0 \;\;
\forall a\in \ascr,
   \end{align*}
   \item[(iii)] for every Hilbert space $\hh$ and every
representation $\pi\colon \ascr \to \ogr(\hh)$,
$\pi|_{ G}$ has the unique extension property,
%    \item[(iv)] for every unital  $C^*$-algebra
% $\bscr$, every unital $*$-homomorphism of
% $C^*$-alge\-bras $\theta\colon \ascr \to \bscr$ and
% every UCP map $\varPhi\colon \bscr \to \bscr$,
%    \begin{align*}
% \varPhi(x)=x \; \forall x \in \theta( G) \implies
% \varPhi(x)=x \; \forall x \in \theta(\ascr),
%    \end{align*}
   \item[(iv)] for every Hilbert space $\hh$,
every representation $\pi\colon \ascr\to \ogr(\hh)$
and every sequence $\varPhi_n\colon \ascr\to
\ogr(\hh)$ $($$n\in \natu$$)$ of UCP maps,
   \begin{align*}
\wwlim_{n\to\infty}\varPhi_n(g)= \pi(g) \; \forall
g\in G \implies \sslim_{n\to\infty}\varPhi_n(a)=\pi(a)
\; \forall a\in \ascr.
   \end{align*}
   \end{enumerate}
Then conditions {\em (i)}-{\em (iii)} are equivalent.
Moreover, if $\ascr$ is separable, then conditions
{\em (i)}-{\em (iv)} are equivalent, and what is more
they are still equivalent regardless of whether the
Hilbert spaces considered in either of them are
separable or not.
   \end{theorem}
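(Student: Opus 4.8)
The plan is to route every equivalence through condition (ii). I would first establish $\text{(i)}\Leftrightarrow\text{(ii)}\Leftrightarrow\text{(iii)}$ and then attach (iv) under separability. Three implications are essentially formal. For $\text{(ii)}\Rightarrow\text{(i)}$, given a faithful $\pi$ and UCP maps $\varPhi_k\colon\ogr(\hh)\to\ogr(\hh)$ as in Definition~\ref{dyfnh}, I set $\psi_k=\varPhi_k\circ\pi$, a UCP map $\ascr\to\ogr(\hh)$ with $\psi_k(g)=\varPhi_k(\pi(g))$, and apply (ii). For $\text{(ii)}\Rightarrow\text{(iii)}$ I use the constant sequence: if $\psi\colon\ascr\to\ogr(\hh)$ is UCP with $\psi|_G=\pi|_G$, feeding $\varPhi_n\equiv\psi$ into (ii) forces $\psi=\pi$, which is the unique extension property. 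For $\text{(i)}\Rightarrow\text{(ii)}$, given an arbitrary representation $\pi$ and UCP maps $\varPhi_n\colon\ascr\to\ogr(\hh)$, I pass to the faithful representation $\tilde\pi=\pi\oplus\rho$ on $\hh\oplus\kk$ (with $\rho$ faithful) and the maps $\tilde\varPhi_n=\varPhi_n\oplus\rho$, observing that $\|\tilde\varPhi_n(a)-\tilde\pi(a)\|=\|\varPhi_n(a)-\pi(a)\|$; since $\tilde\pi$ is faithful, Arveson's extension theorem extends $\tilde\varPhi_n\circ\tilde\pi^{-1}$ from the $C^*$-subalgebra $\tilde\pi(\ascr)$ to a UCP map $\Psi_n$ on all of $\ogr(\hh\oplus\kk)$ with $\Psi_n\circ\tilde\pi=\tilde\varPhi_n$, and (i) applied to $(\tilde\pi,\Psi_n)$ delivers the conclusion.

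The analytic core is $\text{(iii)}\Rightarrow\text{(ii)}$, which I would prove by an ultrapower argument that avoids any separability hypothesis. Fix $\pi,\varPhi_n$ with $\|\varPhi_n(g)-\pi(g)\|\to0$ for $g\in G$, and suppose for contradiction that $\|\varPhi_n(a)-\pi(a)\|\Ge\varepsilon$ for some $a\in\ascr$ along an infinite set $S\subseteq\natu$. Choose a free ultrafilter $\mathcal U$ with $S\in\mathcal U$, form the Hilbert-space ultrapower $\hh_{\mathcal U}$, and define the ultrapower representation $\pi_{\mathcal U}$ and the coordinatewise UCP map $\Phi_{\mathcal U}$ on $\ogr(\hh_{\mathcal U})$. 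Norm convergence on $G$ gives $\Phi_{\mathcal U}|_G=\pi_{\mathcal U}|_G$, so the unique extension property of $\pi_{\mathcal U}|_G$ --- which is condition (iii) for the Hilbert space $\hh_{\mathcal U}$ --- forces $\Phi_{\mathcal U}=\pi_{\mathcal U}$. Testing against unit vectors $[(\xi_n)_n]$ that nearly attain $\|\varPhi_n(a)-\pi(a)\|$ then yields $\lim_{\mathcal U}\|\varPhi_n(a)-\pi(a)\|=0$, contradicting $S\in\mathcal U$. This step is exactly where (iii) must be available for non-separable Hilbert spaces, which is also the source of the final robustness statement.

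For (iv), the implication $\text{(iv)}\Rightarrow\text{(iii)}$ is again the constant-sequence trick and requires no separability: a constant sequence $\psi$ with $\psi|_G=\pi|_G$ is $\wwlim$-convergent to $\pi$ on $G$, so (iv) gives $\sslim_n\psi(a)=\pi(a)$, i.e.\ $\psi=\pi$. The substantive implication $\text{(iii)}\Rightarrow\text{(iv)}$ I would prove for separable $\ascr$ (and, after a separable reduction, for arbitrary $\hh$) using that the set of UCP maps $\ascr\to\ogr(\hh)$ is compact and metrizable in the point-weak-operator topology. Every point-weak-operator cluster point $\psi$ of $(\varPhi_n)$ satisfies $\psi|_G=\pi|_G$ by the WOT hypothesis on $G$, hence $\psi=\pi$ by (iii); metrizability upgrades ``unique cluster point'' to $\varPhi_n(a)\to\pi(a)$ in WOT for every $a$. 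The final passage from WOT to SOT is the Kadison--Schwarz inequality $\varPhi_n(a^*a)\Ge\varPhi_n(a)^*\varPhi_n(a)$: expanding $\|(\varPhi_n(a)-\pi(a))\xi\|^2$ and using the WOT convergence of both $\varPhi_n(a)$ and $\varPhi_n(a^*a)$ together with $\pi(a^*a)=\pi(a)^*\pi(a)$ shows each term tends to $\|\pi(a)\xi\|^2$, so the whole expression tends to $0$.

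The main obstacle I anticipate lies in the two limiting arguments $\text{(iii)}\Rightarrow\text{(ii)}$ and $\text{(iii)}\Rightarrow\text{(iv)}$: one must ensure that the limit object --- the ultrapower map $\Phi_{\mathcal U}$ or a point-weak-operator cluster point $\psi$ --- is genuinely a UCP map defined on the relevant Hilbert space, so that the unique extension property is applicable, and that agreement on $G$ survives the limit. In the ultrapower this rests on checking that coordinatewise UCP maps descend to a well-defined UCP map on $\hh_{\mathcal U}$ and that norm convergence on $G$ collapses the difference to zero; in the cluster-point argument it rests on the compactness and metrizability supplied by separability, plus the Kadison--Schwarz upgrade. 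Reconciling the separable and non-separable formulations, and transferring the resulting operator-level convergence back to the original sequence, is where the care is needed; the remaining manipulations are routine.
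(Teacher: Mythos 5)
The paper itself does not prove this theorem --- it is quoted from Arveson, Kleski and the authors' earlier work --- so your proposal has to be judged against those proofs. Your core arguments are correct: (ii)$\Rightarrow$(i) by composing with $\pi$; (ii)$\Rightarrow$(iii) and (iv)$\Rightarrow$(iii) by constant sequences; (i)$\Rightarrow$(ii) by passing to $\pi\oplus\rho$ with $\rho$ faithful and extending $\tilde\varPhi_n\circ\tilde\pi^{-1}$ by Arveson's extension theorem; and both limiting arguments go through. The ultrapower map $\Phi_{\mathcal U}$ is a well-defined UCP map on $\hh_{\mathcal U}$, norm convergence on $G$ gives $\Phi_{\mathcal U}|_G=\pi_{\mathcal U}|_G$, the unique extension property forces $\Phi_{\mathcal U}=\pi_{\mathcal U}$, and testing on near-norming unit vectors contradicts $\|\varPhi_n(a)-\pi(a)\|\Ge\varepsilon$ on $S\in\mathcal U$. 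The cluster-point plus Kadison--Schwarz argument for (iii)$\Rightarrow$(iv) is essentially Kleski's method, while your ultrapower route to (iii)$\Rightarrow$(ii) is genuinely different from the dilation/shift-type constructions used in the cited sources (and in this paper's own Theorem 2.1), and is arguably cleaner. One small simplification: metrizability is not needed in the cluster-point step; in the compact Hausdorff space of UCP maps with the point-weak operator topology, a sequence whose only cluster point is $\pi$ already converges to $\pi$, so that particular appeal to separability is a red herring.

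The genuine gap is the final clause of the theorem: that for separable $\ascr$ the conditions remain equivalent ``regardless of whether the Hilbert spaces considered in either of them are separable or not.'' Your chain only establishes the equivalence of the all-Hilbert-space versions. The ultrapower $\hh_{\mathcal U}$ is never separable when $\hh$ is infinite dimensional, so starting from the separable-space version of (iii) your proof of (iii)$\Rightarrow$(ii) cannot even begin; your closing remark that this necessity ``is the source of the final robustness statement'' has the logic backwards --- it is precisely the obstruction to proving that statement. What is missing is a separable-reduction lemma: for separable $\ascr$, if every representation on a \emph{separable} Hilbert space has the unique extension property on $G$, then so does every representation. (Given UCP $\psi$ with $\psi|_G=\pi|_G$ on arbitrary $\hh$, fix $a\in\ascr$ and $\xi\in\hh$, choose a countable dense $*$-closed subset $\ascr_0\subseteq\ascr$, and let $\hh_0$ be the smallest closed subspace containing $\xi$ and invariant under the countably many operators $\pi(b),\psi(b)$, $b\in\ascr_0$; then $\hh_0$ is separable, reduces $\pi$, and is invariant for every $\psi(b)$, $b\in\ascr$, so compressing to $\hh_0$ and applying the separable-space unique extension property yields $\psi(a)\xi=\pi(a)\xi$.) With this lemma inserted, your argument covers the full statement; without it, the robustness claim --- which is exactly the contribution the paper attributes to its reference [P-S-24, Theorem B.2] --- remains unproven.
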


In accordance with \v{S}a\v{s}kin's insightful
observation, Arveson \cite{Arv11} conjectured that
hyperrigidity is equivalent to the noncommutative
Choquet boundary of $ G$ being as large as possible,
in the sense that every irreducible representation of
$C^*(G)$, the unital $C^*$-algebra generated by $G$,
should be a boundary representation for $ G$. This is
now known as \textit{Arveson's hyperrigidity
conjecture} (see \cite[Conjecture~4.3]{Arv11}). Some
positive solutions of Arveson's hyperrigidity
conjecture were found for certain classes of
$C^*$-algebras (see
\cite{Arv11,Kle14,CH18,Sal19,Ka-Sh19,Sh20,P-S-24,Sch24}). In full
generality, Arveson's hyperrigidity conjecture turns
out to have a negative solution. Recently, a
counterexample has been found by B. Bilich and A.
Dor-On in \cite{Bi-Dor24} (see also \cite{Bi24}).
However, Arveson's hyperrigidity conjecture is still
open for commutative C*-algebras (even the singly
generated case is not resolved).

In recent years, this issue has attracted considerable
interest in various parts of operator algebras and
operator theory
\cite{KS15,Clo18,Clo18b,CH18,Har19,DK19,Ka-Ra20,DK21,Cl-Ti21,Ki21,Thom24,CH-Th24,P-S-24II}.
In particular, it is related to the Arveson-Douglas
essential normality conjecture involving quotient
modules of the Drury-Arveson space \cite[Theorem
4.12]{KS15} as well as is closely related to
the problem of characterizing the spectrality of semispectral measures by their
operator-moments (see \cite{P-S-24}).

\section{Hyperrigidity via representations}
  In this section, we  show that the property of hyperrigidity can be expressed solely in terms of representations, without the need to involve general UCP maps. 
  % The proof is inspired by \cite{Brown16}
  
  \begin{theorem}
Suppose that $G$ is a nonempty subset of a unital separable
$C^*$-algebra $\ascr$. Then the following conditions are equivalent{\em :}
   \begin{enumerate}
   \item[(i)] $G$ is hyperrigid,
   \item[(ii)] for every Hilbert space $\hh$,
every representations $\pi\colon \ascr\to \ogr(\hh)$
and  $\pi_n\colon \ascr\to
\ogr(\hh)$ $($$n\in \natu$$)$,
   \begin{align*}
\wwlim_{n\to\infty}\pi_n(g)= \pi(g) \; \forall
g\in G \implies \sslim_{n\to\infty}\pi_n(a)=\pi(a)
\; \forall a\in \ascr.
   \end{align*}
   \end{enumerate}
   \end{theorem}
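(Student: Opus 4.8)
The implication (i)$\Rightarrow$(ii) is immediate. Since $\ascr$ is separable, Theorem~\ref{arahc} gives (i)$\Leftrightarrow$(iv), and every representation is a UCP map, so the representation statement (ii) is just the special case of (iv) in which the maps $\varPhi_n$ are taken to be representations. The whole content of the theorem therefore lies in (ii)$\Rightarrow$(i), which I would prove by contraposition. By Theorem~\ref{arahc} it suffices to show $\neg$(i)$\Rightarrow\neg$(ii), and by part (iii) of that theorem $\neg$(i) means that there exist a Hilbert space $\hh$, a representation $\pi\colon\ascr\to\ogr(\hh)$, and a UCP map $\psi\colon\ascr\to\ogr(\hh)$ with $\psi\neq\pi$ while $\psi|_G=\pi|_G$ (equivalently, $\psi$ and $\pi$ agree on the operator system generated by $G$). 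A standard reduction lets me assume $\hh$ is separable. I fix $a_0\in\ascr$ with $\psi(a_0)\neq\pi(a_0)$ and pass to the ampliations $\psi^{(\infty)}=\psi\otimes I_{\ell^2}$ and $\pi^{(\infty)}=\pi\otimes I_{\ell^2}$ on $\hat\hh=\hh\otimes\ell^2$; these still satisfy $\psi^{(\infty)}|_G=\pi^{(\infty)}|_G$ and $\psi^{(\infty)}(a_0)\neq\pi^{(\infty)}(a_0)$.

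The key point is the following approximation lemma: \emph{the UCP map $\psi^{(\infty)}$ is a point--weak-operator limit of a sequence of representations $\pi_n\colon\ascr\to\ogr(\hat\hh)$}. Granting it, the argument closes immediately. For $g\in G$ we obtain $\wwlim_{n}\pi_n(g)=\psi^{(\infty)}(g)=\pi^{(\infty)}(g)$, so the hypothesis of (ii) holds with the reference representation $\pi^{(\infty)}$; on the other hand $\wwlim_{n}\pi_n(a_0)=\psi^{(\infty)}(a_0)\neq\pi^{(\infty)}(a_0)$, and by uniqueness of weak limits $\pi_n(a_0)$ cannot converge strongly to $\pi^{(\infty)}(a_0)$. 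Hence (ii) fails, as required.

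To prove the lemma I would spread a Stinespring dilation across infinitely many orthogonal blocks. Write $\psi=V^*\rho V$ with $\rho\colon\ascr\to\ogr(\kk)$ a representation and $V\colon\hh\to\kk$ an isometry, so that $\sigma:=\rho\otimes I_{\ell^2}$ is a representation on $\kk\otimes\ell^2$ and $\hat V:=V\otimes I_{\ell^2}$ satisfies $\hat V^*\sigma(a)\hat V=\psi^{(\infty)}(a)$. Identifying $\hat\hh$ with $\kk\otimes\ell^2$ (both separable and infinite-dimensional), I would construct unitaries $U_n$ that coincide with $\hat V$ on the first $n$ blocks $\hh\otimes\mathrm{span}\{e_0,\dots,e_{n-1}\}$ and map the remaining blocks unitarily onto the orthogonal complement of $\hat V\bigl(\hh\otimes\mathrm{span}\{e_0,\dots,e_{n-1}\}\bigr)$. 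Setting $\pi_n:=U_n^*\sigma(\cdot)U_n$, one checks that for a fixed vector $\xi\otimes e_k$ the value $\pi_n(a)(\xi\otimes e_k)$ equals $\psi(a)\xi\otimes e_k$ plus a remainder supported in blocks of index $\ge n$, namely the image under $U_n^*$ of the Stinespring defect $(I-VV^*)\rho(a)V\xi$. Testing against a fixed vector and letting $n\to\infty$ makes the remainder vanish, so $\wwlim_{n}\pi_n(a)=\psi^{(\infty)}(a)$ for every $a\in\ascr$.

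The main obstacle is exactly this lemma, i.e.\ the passage from a genuine (non-multiplicative) UCP map to a sequence of honest representations. Its proof is a Weyl--von Neumann--type spreading argument, and the delicate feature is that the Stinespring complement must be pushed to infinity in the weak topology while the compressed diagonal part faithfully reproduces $\psi$; it is precisely the infinite multiplicity supplied by the ampliation that creates the room for this, and the one-dimensional counterexamples (e.g.\ a proper convex combination of two characters of $\comp^2$) show that some such multiplicity is indispensable. The remaining ingredients---the reduction to separable $\hh$ and the block bookkeeping in the construction of the $U_n$---are routine.
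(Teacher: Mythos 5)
Your proof is correct, and at the strategic level it coincides with the paper's: both reduce the problem, via Theorem~\ref{arahc}, to the unique extension property, and both rest on the same key fact --- that a UCP map, once given infinite multiplicity, becomes a point--weak-operator limit of genuine representations, manufactured from a Stinespring dilation by pushing the dilation defect off to infinity; your contrapositive framing versus the paper's direct one is purely cosmetic. The implementations of that key fact differ, however, and the difference is worth recording. The paper stays on the (suitably enlarged) Stinespring space $\kk$: it first arranges $\dim\hh=\dim\kk=\dim(\kk\ominus\hh)=\aleph_0$, which forces it to import an auxiliary representation $\sigma$ on a separable space $\lcal$ and to replace the reference representation by $\pi\oplus\sigma\oplus\sigma\oplus\cdots$; it then takes powers of the single isometry $V=I_\hh\oplus S$ ($S$ a shift of infinite multiplicity) and defines $\pi_m(a)=V^m\rho(a)V^{*m}+(I_\kk-V^mV^{*m})\sigma_\infty(a)$, an additive formula whose multiplicativity must be checked by hand, the weak limit then falling out of the single strong limit $V^{*m}\to P$. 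You instead ampliate both maps by $\ell^2$ and conjugate the ampliated dilation $\rho\otimes I_{\ell^2}$ by unitaries $U_n$ that agree with $V\otimes I_{\ell^2}$ on the first $n$ blocks; each $\pi_n=U_n^*(\rho\otimes I_{\ell^2})(\cdot)U_n$ is then \emph{automatically} a representation, and the role of the paper's filler $\sigma_\infty$ is played by $\rho$ itself through the unitarity of $U_n$, so the auxiliary representation, the multiplicativity check, and most of the dimension bookkeeping disappear. The price is two small reductions you correctly flag as routine: $\kk$ must be separable (take the minimal dilation) and $\hh$ must be cut down to a separable subspace jointly reducing for $\pi$ and all $\psi(a)$, which exists because $\ascr$ is separable and $a\mapsto\psi(a)$ is norm-contractive. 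In short: same theorem-level strategy and same key lemma, but your construction of the approximating representations is genuinely different and arguably cleaner, while the paper's keeps the entire computation on one space with one fixed isometry.
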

   \begin{proof}
   (i)$\Rightarrow$(ii) It follows from Theorem~\ref{murzeqiv}.
   
   (ii)$\Rightarrow$(i)
       First note that there exists a representation $\sigma\colon \ascr\to \ogr(\lcal)$, where $\lcal$ is an infinite-dimensional separable Hilbert space, i.e. $\dim \lcal=\aleph_0$. Indeed, let $\tilde{\sigma}\colon \ascr\to \ogr(\mcal)$ be a faithful representation of $\ascr$. Since, $\ascr$ is separable, so is $\mcal$. If $\dim \mcal=\aleph_0$ we are done. If not, then $\sigma:=\tilde{\sigma}\oplus\tilde{\sigma}\oplus\tilde{\sigma}\oplus...\colon \ascr\to \ogr(\lcal)$ is the required (faithfull) representation, where $\lcal=\mcal\oplus\mcal\oplus\dots$.
       
        Let $\pi\colon \ascr\to
\ogr(\hh)$ be a representation and
$\varPhi\colon \ascr \to \ogr(\hh)$ be a UCP
map such that
   \begin{align} \label{wuir}
\pi(g) = \varPhi(g), \quad g\in G.
   \end{align}
It follows from the Stinespring dilation theorem (see
\cite[Theorem~1]{Sti55}) that there exist a Hilbert
space $\kk\supseteq \hh$ and a representation
$\rho\colon \ascr \to \ogr(\kk)$ such that
   \begin{align} \label{viro}
\varPhi(a)= P\rho(a)|_{\hh}, \quad a \in
\ascr,
   \end{align}
where $P\in \ogr(\kk)$ is the orthogonal
projection of $\kk$ onto $\hh$. 

% Without loss of generality, we can assume that $\dim \hh =\dim \kk=\aleph_0$ and $\dim \kk\ominus \hh=\aleph_0$.
Now we will show that without loss of generality we can assume that $\dim \hh = \aleph_0$.
% Without loss of generality supousse that $\dim \hh = \aleph_0$.
We always have that $\dim \hh \Le \aleph_0$ as a consequence of the separability of $\ascr$.
% We can always assume that $\dim \hh \Le\aleph_0$ (because of seperability of $\ascr$).
Suppose $\dim \hh <\aleph_0$. Consider $\tilde \hh =\hh\oplus \lcal$ (recall $\dim \lcal=\aleph_0$), $\tilde{\pi}:=\pi\oplus\sigma\colon \ascr \to \ogr(\tilde{\hh})$ is a faithful representation and $\tilde{\varPsi}:=\varPsi\oplus\sigma$ is a UCP map such that $\tilde{\pi}|_G=\tilde{\varPsi}|_G$. Since $\dim \tilde{H}=\aleph_0$, we infer from the $\aleph_0$-version of the proof that $\tilde{\pi}=\tilde{\varPsi}$ on $\ascr$, i.e., $\pi\oplus\sigma=\varPhi\oplus\sigma$, which implies that $\pi=\varPhi$, and we are done.

Our next goal is to show that, without loss of generality $\dim \kk=\aleph_0$.
For this set $\kk_{\text{min}}=\bigvee_{a\in \ascr}\rho(a)\hh$. Then $\kk_{\text{min}}$ reduces $\rho$ and $\hh\subseteq\kk_{\text{min}}$, so $\varPhi=P_m\rho_m|_\hh$, where $P_m\in \ogr(\kk_{\text{min}})$ is the orthogonal projection of $\kk_{\text{min}}$ onto $\hh$, and $\rho_m=\rho|_{\kk_{\text{min}}}$. However, if $\bar{\ascr_0}=\ascr$ with $\card \ascr_0\Le\aleph_0$, and $\bar{\hh_0}=\hh$ with $\card \hh_0\Le\aleph_0$ then
\begin{align*}
   \kk_{\text{min}}=\bigvee_{a\in \ascr_0}\rho(a)\hh =\bigvee_{a\in \ascr_0}\rho(a)\hh_0,
\end{align*}
so $\dim \kk_{\text{min}}\Le\aleph_0$. However, $\aleph_0=\dim \hh\Le\dim \kk_{\text{min}}\Le\aleph_0$, so $\dim \kk_{\text{min}}=\aleph_0$
Our next step is to show that, without loss of generality, we can assume that $\dim \kk\ominus\hh=\aleph_0$. Clearly $\dim \kk\ominus\hh\Le \dim \kk =\aleph_0$. Suppose $\dim \kk\ominus\hh<\aleph_0$. Consider $\tilde{\kk}=\kk\oplus\lcal$, and $\tilde{\rho}=\rho\oplus\sigma\colon \ascr \to \ogr(\kk\oplus\lcal)$. Then $\tilde{\rho}$ is a faithfull representation and $\dim \kk\oplus\lcal=\aleph_0$ and $\varPsi=P^{\kk\oplus\lcal}_\hh\tilde{\rho}|_\hh$ on $\ascr$.

($P^{\kk\oplus\lcal}_\hh$ is the orthogonal projection of $\kk\oplus\lcal$ onto $\hh$)

Note that, without loos of generality we can assume that
$\hh=\lcal$ and $\kk\oplus\hh=\lcal\oplus\lcal\oplus\lcal\oplus\dots$, where as before $\dim \lcal =\aleph_0$.

Let $S\in\ogr(\kk\ominus\hh)$ be the isometric shift of multiplicity $\aleph_0$, i.e.,
\begin{align*}
    S(h_0\oplus h_1\oplus\dots)=0\oplus h_0\oplus h_1\oplus\dots,\quad h_j\in \lcal.
\end{align*}
Set $V=I_\hh\oplus S\in \ogr(\kk)$. Then $V$ is an isometry such that $\sslim_{n\to \infty} V^{*n}=P$, where $P\in\ogr(\kk)$ is the orthogonal projection onto $\hh$.

Define the representation $\sigma_\infty\colon\ascr\to \ogr(\kk)$ by
$\sigma_\infty=\sigma\oplus\sigma\oplus\dots$, where
$\kk=\hh\oplus(\kk\ominus\hh)=\lcal\oplus(\lcal\oplus\lcal\oplus\dots)$

Note that 
\begin{align*}
    (I_\kk-V^mV^{*m})(f\oplus(h_0\oplus h_1\oplus\dots))=0\oplus(h_0\oplus h_1\oplus\dots\oplus h_{m-1}\oplus0\oplus\dots),
\end{align*}
so $I_\kk-V^mV^{*m}$ is diagonal with respect to the same orthogonal decomposition $\kk=\lcal\oplus(\lcal\oplus\lcal\oplus\dots)$.
Also $\sigma_\infty(a)$ is diagonal with respect to the same orthogonal decomposition. Thus
\begin{align*}
    (I_\kk-V^mV^{*m})\sigma_\infty(a)=\sigma_\infty(a)(I_\kk-V^mV^{*m})
\end{align*}
for all $m\in\mathbb{Z}_+$ and $a\in \ascr$.
 
   This implies that $\ascr\ni a\to(I_\kk-V^mV^{*m})\sigma_\infty(a)\in \ogr(\kk)$ is $*-$multiplicative and linear. In fact, we have
   \begin{align*}
       (I_\kk-V^mV^{*m})\sigma_\infty(a)=0\oplus\underbrace{\sigma(a)\oplus\dots\sigma(a)}_{m}\oplus0\oplus\dots
   \end{align*}
Define  $\pi_m\colon\ascr\to\ogr(\kk)$
\begin{align*}
\pi_m(a)=V^m\rho(a)V^{*m}+(I_\kk-V^mV^{*m})\sigma_\infty(a),\quad a \in \ascr.
\end{align*}
Note that
\begin{align*}
\pi_m(a)=\psi_m(a)|_{\ran(V^m)}\oplus\underbrace{\sigma(a)\oplus\sigma(a)\oplus\dots\sigma(a)}_{m},
\end{align*}
where
\begin{align*}
\psi_m(a)=V^m\rho(a)V^{*m},\quad a \in \ascr.
\end{align*}
As we know, $\pi_m\colon \ascr\to\ogr(\kk)$ is linear and $*-$multiplicative, so 
$\psi_m(a)|_{\ran(V^m)}$ is a representation. This implies that each $\pi_m$ is a representation. Since, we have $\sslim_{n\to \infty} V^{*n}=P$, we get 
\begin{align}\label{wigi}
\wwlim_{n\to\infty}\varPhi_n(a)&=\wwlim_{n\to\infty}V^m\rho(a)V^{*m}+\wwlim_{n\to\infty}(I_\kk-V^mV^{*m})\sigma_\infty(a)\\&=P\rho(a)P+(I-P)\sigma_\infty(a)=P\rho(a)|_\hh\oplus\sigma(a)\oplus\sigma(a)\oplus\dots,\quad a\in \ascr.\notag
\end{align}
By \eqref{wigi}, we get
\begin{align*}
     \wwlim_{n\to\infty} \pi_n(g)=\pi(g)\oplus\sigma(g)\oplus\sigma(g)\oplus\dots,\quad g\in G.
\end{align*}
It follows from Theorem~\ref{murzeqiv}(v) that
\begin{align}\label{golret}
\wwlim_{n\to\infty} \varPhi_n(a)=\pi(a)\oplus\sigma(a)\oplus\sigma(a)\oplus\dots,\quad a\in \ascr.
\end{align}
Comparing \eqref{wigi} with \eqref{golret} we obtain
\begin{align*}
    \pi(a)=P\rho(a)|_\hh,\quad a\in \ascr.
\end{align*}
In view of beging of the proof, we conclude that $\pi(a)=\varPhi(a)$ for all $a\in \ascr$. Applying Theorem~\ref{murzeqiv}, we obtain that $G$ is hyperrigid set.
  \end{proof}
 
%   \begin{theorem} \label{idyln}
%Let $G$ be a hyperrigid subset of a unital
%$C^*$-algebra $\ascr$ and $\varPhi\colon \ascr \to
%\ogr(\hh)$ be a UCP map such that $G\subseteq \ker
%\varPhi$. Let $\hh$ and $\kk$ be Hilbert spaces,
%$\pi\colon \ascr \to \ogr(\hh)$ and $\rho\colon \ascr
%\to \ogr(\kk)$ be representations and $R\in
%\ogr(\hh,\kk)$ be a contraction such that $\pi(g) =
%R^*\rho(g)R$ for every $g\in G$. Then
%   \begin{align} \label{fewas}
%R \pi(a) = \rho(a) R \;\; \forall a\in \ascr \;\; \&
%\;\; \ker \varPhi \subseteq J \;\; \& \;\; J \text{ is
%a $*$-ideal in } \ascr,
%   \end{align}
%where $J:=\{a\in \ascr \colon \pi(a) = R^*\rho(a)R\}$.
%Moreover, if $R$ is non-isometric, then there exists a
%character $\chi$ of $\ascr$ such that
%   \begin{enumerate}
%   \item[(i)] $\ker \chi=J$ and $\varPhi = \chi I$,
%   \item[(ii)] $\triangle$ commutes with $\pi$
%and $\triangle_*$ commutes with $\rho$,
%   \item[(iii)] $\hh_1:=\overline{\ran(\triangle)}\neq \{0\}$
%reduces $\pi$ and $\kk_1:
%=\overline{\ran(\triangle_*R)}$ reduces $\rho$,
%   \item[(iv)] $R=R_0\oplus R_1$, where $R_0\in
%\ogr(\hh_0, \kk_0)$ is an isometry with $\hh_0:=
%\hh_1^{\perp}$ and $\kk_0:= \kk_1^{\perp}$, and
%$R_1\in \ogr(\hh_1, \kk_1)$ is a pure contraction with
%dense range,
%   \item[(v)] $\pi=\pi_0 \oplus \chi
%I_{\hh_1}$ and $\pi_0=R_0^*\rho_0R_0$, where
%$\pi_0:=\pi|_{\hh_0}$ and $\rho_0:=\rho|_{\kk_0}$,
%   \item[(vi)] $\rho=\rho_0 \oplus \chi
%I_{\kk_1}$.
%   \end{enumerate}
%   \end{theorem}

   \bibliographystyle{amsalpha}
   
   \end{document}